\newtheorem{remark} {Remark}
\newtheorem{theorem} {Theorem}
\newtheorem{lemma} {Lemma}
\newtheorem{corollary} {Corollary}
\newtheorem{assumption} {Assumption}
\def\x{{\mathbf{x}}}
\def\u{{\mathbf{u}}}
\def\v{{\mathbf{v}}}
\def\z{{\mathbf{z}}}
\def\w{{\mathbf{w}}}
\def\y{{\mathbf{y}}}
\def\p{{\mathbf{p}}}
\def\b{{\mathbf{b}}}
\def\X{{\mathbf{X}}}
\def\Y{{\mathbf{Y}}}
\def\A{{\mathbf{A}}}
\def\I{{\mathbf{I}}}
\def\V{{\mathbf{V}}}
\def\Z{{\mathbf{Z}}}
\def\W{{\mathbf{W}}}
\def\U{{\mathbf{U}}}
\def\Q{{\mathbf{Q}}}
\def\P{{\mathbf{P}}}
\def\S{{\mathbf{S}}}
\def\D{{\mathbf{D}}}
\newcommand{\mC}{\mathcal{C}}
\newcommand{\mX}{\mathcal{X}}
\newcommand{\mP}{\mathcal{P}}
\newcommand{\mK}{\mathcal{K}}
\newcommand{\mS}{\mathcal{S}}
\newcommand{\ball}{\mathcal{B}}
\newcommand{\mbS}{\mathbb{S}}
\newcommand{\E}{\mathbb{E}}
\newcommand{\nnz}{\mathrm{nnz}}
\newcommand{\trace}{\mathrm{Tr}}
\newcommand{\rank}{\mathrm{rank}}
\newcommand{\reals}{\mathbb{R}}
\newcommand{\op}{\mathrm{op}}
\title{First-Order Sparse Convex Optimization: \\ Better Rates with Sparse Updates}
\author{Dan Garber \\ \small{dangar@technion.ac.il} \\ \small{Faculty of Data and Decision Sciences} \\ \small{Technion - Israel Institute of Technology}}
\date{}
\begin{document}

\maketitle

\begin{abstract}
It was recently established that for convex optimization problems with sparse optimal solutions (be it entry-wise sparsity or matrix rank-wise sparsity) it is possible to design first-order methods with linear convergence rates that depend on an improved mixed-norm condition number of the form $\frac{\beta_1{}s}{\alpha_2}$, where $\beta_1$ is the $\ell_1$-Lipschitz continuity constant of the gradient, $\alpha_2$ is the $\ell_2$-quadratic growth constant, and $s$ is the sparsity of optimal solutions. However, beyond the improved convergence rate, these methods are unable to leverage the sparsity of optimal solutions towards improving the runtime of each iteration as well, which may still be prohibitively high for high-dimensional problems. In this work, we establish that  linear  convergence rates which depend on this improved condition number can be obtained using only sparse updates, which may result in overall significantly improved running times. Moreover, our methods are considerably easier to implement.
\end{abstract} 


\section{Introduction}
Convex optimization techniques have been extremely successful in problems where the goal is to recover a sparse signal given certain measurements, e.g., \cite{bickel2009simultaneous, candes2012exact, candes2009near, candes2011tight, candes2006stable, fazel2008compressed, juditsky2011accuracy, koltchinskii2011nuclear}.
This work concerns certain canonical and popular convex relaxations to continuous optimization problems in which the goal is to find some optimal solution which is assumed to be sparse. We begin by presenting our two main examples.

\paragraph*{Example 1: $\ell_1$-constrained minimization for entry-wise sparse recovery.}
\begin{align}\label{eq:L1ProbInt}
\min_{\x\in\reals^n:~\Vert{\x}\Vert_1\leq R}f(\x),
\end{align}
where $f:\reals^n\rightarrow\reals$ is convex and continuously differentiable, and we assume that all optimal solutions have sparsity (number of non-zero entries) at most $s << n$.

\paragraph*{Example 2: Nuclear norm-constrained minimization for low-rank  matrix recovery.}
\begin{align}\label{eq:NNProbInt}
\min_{\X\in\reals^{m\times n}:~\Vert{\X}\Vert_{1} \leq R}f(\X),
\end{align}
where here we let $\Vert{\cdot}\Vert_{1}$ denote the matrix nuclear norm in $\reals^{m\times n}$, i.e., sum of singular values.

Here we also assume $f:\reals^{m\times n}\rightarrow\reals$ is convex and continuously differentiable, and that all optimal solutions have rank at most $s << \min\{m,n\}$.

With regard to both problems, let us denote by $\beta$ the Lipschitz continuity constant of the gradient w.r.t. some norm $\Vert{\cdot}\Vert$, i.e., for all $\x,\y$ we have that $\Vert{\nabla{}f(\x) - \nabla{}f(\y)}\Vert_* \leq \beta\Vert{\x-\y}\Vert$, where $\Vert{\cdot}\Vert_*$ denotes the norm dual to $\Vert{\cdot}\Vert$. Suppose also that $f$ satisfies a lower curvature condition such as strong convexity, or the weaker quadratic growth condition \cite{necoara2019linear}, with constant $\alpha >0$ w.r.t. the same norm, i.e., that for any \textit{feasible} $\x$ it holds that $\min_{\x^*\in\mX^*}\Vert{\x-\x^*}\Vert^2 \leq \frac{2}{\alpha}\left({f(\x) - f(\x^*)}\right)$, where we denote by $\mX^*$ the set of all optimal solutions. In this setup, known gradient methods can produce an $\epsilon$-approximated solution (in function value) in worst-case number of iterations that scales like $\frac{\beta}{\alpha}\log{1/\epsilon}$, or even $\sqrt{\frac{\beta}{\alpha}}\log{1/\epsilon}$ when accelerated methods are used \cite{nesterov2013introductory, beck2017first, necoara2019linear}. The ratio $\beta/\alpha$ is the \textit{condition number} of $f$ w.r.t. the norm $\Vert{\cdot}\Vert$ and is naturally dependent on the choice of norm. 

In recent works  \cite{juditsky2023sparse, ilandarideva2024accelerated}, it was established that for convex relaxations of sparse optimization problems such as Problems \eqref{eq:L1ProbInt} and \eqref{eq:NNProbInt}, and under the quite plausible assumption that all optimal solutions are indeed sparse/low-rank, it is possible to design gradient methods that obtain linear convergence rates which depend on a mixed-norm condition number of the form $\frac{\beta_1s}{\alpha_2}$, where $\beta_1$ is the Lipschitz continuity constant of the gradient w.r.t. the $\ell_1$ norm, $\alpha_2$ is the  quadratic growth constant w.r.t. the $\ell_2$ norm, and $s$ is an upper-bound on the sparsity of optimal solutions. This condition number could indeed be far better than the pure $\ell_2$ or pure $\ell_1$ condition numbers.
For instance, in the context of Problem \eqref{eq:L1ProbInt}, consider the case that $f$ is convex quadratic of the following very simple form:
\begin{align}\label{eq:badFunc}
f(\x) = \frac{1}{2}(\x-\x^*)^{\top}\A(\x-\x^*); \quad \A := \alpha\I + \mathbf{1}\mathbf{1}^{\top},
\end{align}
where $\mathbf{1}$ denotes the all-ones vector in $\reals^n$ and $\alpha$ is a positive scalar.

Indeed if $\Vert{\x^*}\Vert_1 \leq R$, $\x^*$ is the unique optimal solution to Problem \eqref{eq:L1ProbInt} for this $f$.
One can easily observe that the Euclidean condition number for this problem is given by $\frac{\beta_2}{\alpha_2} = \frac{\lambda_{\max}(\A)}{ \lambda_{\min}(\A)} = \frac{n+\alpha}{\alpha}$.
On the other hand, a simple calculation shows that the $\ell_1$-Lipschitz continuity constant of the gradient is given by $\beta_1 = \max_{i,j}\vert{\A_{i,j}}\vert = 1+\alpha$. 
To bound the $\ell_1$-quadratic growth constant we observe that due to the structure of the matrix $\A$,
\begin{align*}
\forall \x: \quad  f(\x) - f(\x^*) = \frac{\alpha}{2}\Vert{\x-\x^*}\Vert_2^2 + \frac{1}{2}\left({\sum_{i=1}^n\x_i - \sum_{j=1}^n\x^*_j}\right)^2. 
\end{align*}
In particular, assuming $\x^* \geq 0$ we have that 
\begin{align*}
\forall \x\in\{\y\in\reals^n~|~\y\geq 0, \Vert{\y}\Vert_1 = \Vert{\x^*}\Vert_1\}: \quad  f(\x) - f(\x^*) = \frac{\alpha}{2}\Vert{\x-\x^*}\Vert_2^2. 
\end{align*}
This implies that the $\ell_1$-quadratic growth constant $\alpha_1$ is not better than $\frac{\alpha}{n}$, which implies that the $\ell_1$-condition number is $\frac{\beta_1}{\alpha_1} = \frac{n+n\alpha}{\alpha}$, which is no better than the Euclidean one.

However, in the example above, if $\x^*$ satisfies $\nnz(\x^*) \leq s << n$, the sparsity-based mixed-norm condition number satisfies $\frac{\beta_1s}{\alpha_2} = \frac{s(1+\alpha)}{\alpha}$, which is in particular independent of the ambient dimension, and thus may lead to far-better (worst-case) convergence rates for suitable first-order methods.

The example in Eq. \eqref{eq:badFunc}  could be directly extended also to Problem \eqref{eq:NNProbInt} by considering the space $\reals^{n\times n}$ and taking $f(\X) = \frac{\alpha}{2}\Vert{\X-\X^*}\Vert_2^2 + \frac{1}{2}\left({\trace(\X-\X^*)}\right)^2$ with $\X^*$  satisfying $\rank(\X^*) \leq s << n$, and letting $\Vert{\cdot}\Vert_2$ here denote the Euclidean (Frobenius) norm, and instead of the entry-wise $\ell_1$ norm, we consider the matrix nuclear norm ($\ell_1$ norm w.r.t. the vector of singular values).

The recent works \cite{juditsky2023sparse, ilandarideva2024accelerated}\footnote{while \cite{juditsky2023sparse, ilandarideva2024accelerated} consider a stochastic convex optimization setting in which the objective function $f$ is given as an expectation w.r.t. an unknown distribution from which we can sample, here our focus is on the standard deterministic first-order convex optimization model} indeed lead to convergence rates of the form $\tilde{O}\left({\frac{\beta_1s}{\alpha_2}\log\frac{1}{\epsilon}}\right)$ \cite{juditsky2023sparse} and even an accelerated rate of the form $\tilde{O}\left({\sqrt{\frac{\beta_1s}{\alpha_2}}\log\frac{1}{\epsilon}}\right)$ \cite{ilandarideva2024accelerated} (the $\tilde{O}(\cdot)$ notation hides certain poly-logarithmic factors), by applying a restarted non-Euclidean gradient method or its accelerated variant, together with hard-thresholding steps to sparsify intermediate iterates. In Section \ref{sec:acc} we give a simple derivation of this accelerated rate.

While these non-Euclidean gradient methods lead to superior convergence rates, they do not leverage the sparsity of optimal solutions towards additional computational gains, and in particular may still require prohibitive running times per iteration when the dimension is very high. In the case of Problem \eqref{eq:L1ProbInt}, these may suffer from prohibitive running time for updating the gradient direction from one iteration to the next. In the case of Problem \eqref{eq:NNProbInt}, the time required by these methods to project onto the nuclear norm ball ($O(\min\{m,n\}^2\max\{m,n\})$ time, in standard implementations, to compute a full compact-form SVD) may be prohibitive.

In this work we present a new and simple approach towards obtaining linear convergence rates that depend on the mixed-norm condition number $\frac{\beta_1s}{\alpha_2}$ (though our method only obtains rates that are linear in it and not accelerated rates that scale with its square root). The main novel aspect is that our algorithm only applies \textit{sparse updates} (with the same level of sparsity assumed for the optimal solutions), which often allows for significant improvements in the runtime of each iteration. In particular, our algorithm updates its iterates similarly to the classical Frank-Wolfe algorithm \cite{jaggi2013revisiting}:
\begin{align}\label{eq:int:FW}
\x_{t+1} \gets (1-\gamma)\x_t + \gamma\v_t \quad \textrm{for some}\quad  \gamma\in[0,1],
\end{align}
where $\v_t$ is a feasible point with the same level of sparsity assumed for the optimal solutions. 

For instance, in case of Problem \eqref{eq:L1ProbInt} with a quadratic objective function $f(\x) = \x^{\top}\A\x + \b^{\top}\x$, the fact that $\v_t$ is $s$-sparse, reduces the time to update the gradient direction from $O(n^2)$ to $O(sn)$. In the case of Problem \eqref{eq:NNProbInt}, the fact that $\rank(\v_t) \leq s$ implies that computing it will not require  a full compact-form SVD ($O(\min\{m,n\}^2\max\{m,n\})$ time in standard implementations), but only computing the $s$ leading components in the SVD, which is far more efficient.

Two additional advantages which make our approach considerably simpler (compared to \cite{juditsky2023sparse, ilandarideva2024accelerated}) are: I. it does not require restarts, i.e., it is a single-loop algorithm, and II. the updates in our algorithm (the way $\v_t$ in Eq. \eqref{eq:int:FW} is computed) are Euclidean, which often leads to significantly simpler implementations.

Compared to \cite{juditsky2023sparse, ilandarideva2024accelerated}, we do have an additional mild assumption, which in the case of Problems \eqref{eq:L1ProbInt} and \eqref{eq:NNProbInt} requires that all optimal solutions are on the boundary, i.e., that $\Vert{\x^*}\Vert_1 = R$ for any optimal solution $\x^*$. We consider this to be quite mild since in the context of sparse recovery it is not likely that the optimal solution lies in the interior (in particular it implies that the norm constraint is redundant).

Aside from our original method we also make the observation that for Problem \eqref{eq:L1ProbInt},  another method that is able to attain linear convergence with the mixed-norm condition number $\frac{\beta_1s}{\alpha_2}$ that uses only sparse updates is the well-known \textit{away-steps Frank-Wolfe} (AFW) method \cite{guelat1986some, lacoste2015global}. In Section \ref{sec:afw} we show that, assuming all optimal solutions to Problem \eqref{eq:L1ProbInt} are $s$-sparse and lie on the boundary (same assumption made for our proposed method), AFW has linear convergence of the form $O\left({\frac{\beta_1sR^2}{\alpha_2}\log{1/\epsilon}}\right)$. Note the additional dependence on the squared radius $R^2$. In particular, under the plausible assumption that $R = \Omega(s)$ (e.g., entries in optimal solutions are $\Theta(1)$), this rate becomes $\Omega\left({\frac{\beta_1s^3}{\alpha_2}\log{1/\epsilon}}\right)$. Note also that AFW does not extend to handle Problem \eqref{eq:NNProbInt}.

\begin{table*}\renewcommand{\arraystretch}{1.7}
{\footnotesize
\begin{center}

\newcolumntype{C}[1]{>{\centering\arraybackslash}m{#1}}
\begin{tabular}{ | C{14.9em}  | C{5.0em} | C{6em} | C{10em} |} 
  \hline
  Algorithm & required parameters & \#iterations to $\epsilon$ error & iteration time \\
  \hline
  \multicolumn{4}{|c|}{minimization over $\ell_1$-ball with $f(\x) = \frac{1}{2}\x^{\top}\A\x + \b^{\top}\x$, $\A\succeq 0$, $\nnz(\x^*)\leq s~ \forall \x^*\in\mX^*$ } \\
\hline
  Away-Step Frank-Wolfe with line-search \cite{guelat1986some, lacoste2015global}  (see Section \ref{sec:afw}) & - & $\frac{\beta_1sR^2}{\alpha_2}$  & $O(n)$  \\ \hline
  Restarted Acc. Grad. with hard-thresholding \cite{ilandarideva2024accelerated} (see Section \ref{sec:acc})& $s, \beta_1, \alpha_2$& $\sqrt{\frac{\beta_1s}{\alpha_2}}$ & $O(n^2)$  \\ \hline
  Our Algorithm \ref{alg:weakProxFW} (see Corollary \ref{cor:L1})& $s, \alpha_2$ & $\frac{\beta_1s}{\alpha_2}$ & $O(sn)$   \\ \hline
  \multicolumn{4}{|c|}{minimization over matrix nuclear norm ball, $\rank(\X^*) \leq s~\forall\X^*\in\mX^*$} \\
\hline
  Restarted Acc. Grad. with hard-thresholding \cite{ilandarideva2024accelerated}  (see Section \ref{sec:acc})& $s, \beta_1, \alpha_2$& $\sqrt{\frac{\beta_1s}{\alpha_2}}$ & full SVD of $m\times n$ matrix  \\ \hline
  Our Algorithm \ref{alg:weakProxFW} (see Corollary \ref{cor:NN})& $s, \alpha_2$ & $\frac{\beta_1s}{\alpha_2}$ & rank-$s$ SVD of $m\times n$ matrix   \\ \hline

\end{tabular}\caption{Comparison of first-order methods with linear convergence rates that scale with the mixed-norm condition number $\frac{\beta_1s}{\alpha_2}$ for Problems \eqref{eq:L1ProbInt} and \eqref{eq:NNProbInt}, omitting logarithmic factors and universal constants.}\label{table:res}
\end{center}}
\vskip -0.2in
\end{table*}\renewcommand{\arraystretch}{1.5}  

From Table \ref{table:res} we can see for instance that in case of Problem \eqref{eq:L1ProbInt} with a quadratic objective, whenever $s << \min\{R^2, n^{2/3}\left({\alpha_2/\beta_1}\right)^{1/3}\}$, the overall runtime of our method is the state-of-the-art.

The rest of the paper is organized as follows. In Section \ref{sec:theory} we present a unified setup and optimization problem which captures the sparse convex optimization problems we are interested in, a generic algorithm for solving this problem, and its convergence analysis. In Section \ref{sec:apps} we instantiate our generic algorithm in the special cases of Problems \eqref{eq:L1ProbInt} and \eqref{eq:NNProbInt}. In Section \ref{sec:exp} we give a simple numerical demonstration for the performance of our method on Problem \eqref{eq:L1ProbInt}. Finally, in Section \ref{sec:future} we outline several open questions.

\section{Unified Setup, Algorithm, and Convergence Analysis}\label{sec:theory}
Our formal unified setup for tackling the sparse convex optimization problems under consideration involves the following convex optimization problem:
\begin{align}\label{eq:optProb}
\min_{\x\in\mK}f(\x),
\end{align}
where $\mK$ is a convex and compact subset of a Euclidean vector space $\E$ equipped with an inner-product $\langle{\cdot,\cdot}\rangle$, $f$ is convex over $\mK$ with Lipschitz continuous gradient with parameter $\beta$ w.r.t. some norm $\Vert{\cdot}\Vert$, i.e.,
\begin{align*}
\forall \x,\y\in\mK: \quad \Vert{\nabla{}f(\x) - \nabla{}f(\y)}\Vert_* \leq \beta\Vert{\x-\y}\Vert,
\end{align*} 
where $\Vert{\cdot}\Vert_*$ denotes the dual norm.

We let $f^*$ denote the optimal value of \eqref{eq:optProb} and we let $\mX^*\subseteq\mK$ denote the set of minimizers.
We let $\Vert{\x}\Vert_2  = \sqrt{\langle{\x,\x}\rangle}$ denote the Euclidean norm. 

We shall assume Problem \eqref{eq:optProb} satisfies a quadratic growth condition with parameter $\alpha_2 > 0$ w.r.t. the Euclidean norm, i.e.,
\begin{align}\label{eq:qg}
\forall \x\in\mK: \quad \min_{\x^*\in\mX^*}\Vert{\x-\x^*}\Vert_2^2 \leq \frac{2}{\alpha_2}\left({f(\x) - f(\x^*)}\right).
\end{align}

We now make an additional assumption that further specializes Problem \eqref{eq:optProb}. In Section \ref{sec:apps} we  show that in the case of Problems \eqref{eq:L1ProbInt} and \eqref{eq:NNProbInt} (and certain generalizations), Assumption \ref{ass:nonStand} is satisfied once we make the plausible assumption that the norm constraint is tight for all optimal solutions.

\begin{assumption}\label{ass:nonStand}
There exists a closed subset $\E_s\subset\E$ such that $\mX^*\subseteq\mK_s := \mK\cap\E_s$, and there exist scalars $s, s_{\mK}^* \geq 1$ such that, 
\begin{enumerate}
\item \label{item:ass:ns:1}
for all $\x,\y\in\E_s$: $\Vert{\x-\y}\Vert \leq \sqrt{2s}\Vert{\x-\y}\Vert_2$,
\item \label{item:ass:ns:2}
for all $\x\in\mK$ and $\x^*\in\mX^*$: $\Vert{\x-\x^*}\Vert \leq \sqrt{s^*_{\mK}}\Vert{\x-\x^*}\Vert_2$,
\item \label{item:ass:ns:3}
for all $\x\in\mK$: $\arg\min\nolimits_{\y\in\E_s}\Vert{\y -\x}\Vert = \arg\min\nolimits_{\y\in\E_s}\Vert{\y -\x}\Vert_2$ (note the argmin need not be a singleton),
\end {enumerate}
\end{assumption}
In the context of Problems \eqref{eq:L1ProbInt} and \eqref{eq:NNProbInt} we should think of $\E_s$ as the restriction of the space $\E$ to vectors with a bounded level of sparsity  (either in the sense of number of non-zero entries or matrix rank), and accordingly $\mK_s$ is the restriction of $\mK$ to vectors in $\E_s$. Additionally, while not part of Assumption \ref{ass:nonStand}, we shall implicitly assume that it is computationally efficient to compute a Euclidean projection onto $\E_s$ and $\mK_s$ (in case the projection is not a singleton, compute one element in the set of possible projections). Accordingly, we denote by $\Pi_{\E_s}[\cdot]$ and $\Pi_{\mK_s}[\cdot]$ the Euclidean projection operation onto $\E_s$ and $\mK_s$, respectively. Note  that due to Item \ref{item:ass:ns:3} in Assumption \ref{ass:nonStand}, $\Pi_{\E_s}[\cdot]$ is also the projection w.r.t. the norm $\Vert{\cdot}\Vert$. In the context of both Problems \eqref{eq:L1ProbInt} and \eqref{eq:NNProbInt}, we shall demonstrate in the sequel that computing these projections is much more efficient than computing the Euclidean projection over the entire feasible set $\mK$ when the optimal solutions are indeed sparse.

We can now present our algorithm for solving the generic Problem \eqref{eq:optProb} (under the assumptions listed above).
Our algorithm is composed of three simple ingredients. First, we use a Frank-Wolfe-like scheme \cite{jaggi2013revisiting} in which the iterates $(\x_t)_{t\geq 1}$ are updated by taking a convex combination between the current feasible iterate and a new feasible point, as already shown in Eq. \eqref{eq:int:FW}. Second, and differently from the classical Frank-Wolfe method in which the new point $\v_t$ is the output of a linear optimization oracle, here we take it to be a Euclidean proximal gradient step w.r.t. restricted set $\mK_s$ (instead of w.r.t. the original set $\mK$). This is similar to the approach taken in previous works \cite{allen2017linear, garber2019fast, garber2021improved, garber2023faster} (also referred to as a \textit{weak proximal oracle} \cite{garber2019fast, garber2023faster}). Third, the $\ell_2$-term in this proximal gradient step is not w.r.t. the current iterate $\x_t$, but with respect to its projection onto $\E_s$, which should be understood as a hard-thresholding (or sparsification) of $\x_t$.

\begin{algorithm}
\caption{Frank-Wolfe with sparse proximal oracle and hard-thresholding}
\label{alg:weakProxFW}
\begin{algorithmic}
\STATE $\x_1 \gets $ some arbitrary point in $\mK$
\FOR{$t=1,2\dots $}
\STATE $\widehat{\x}_t \gets \Pi_{\E_s}[\x_t]$
\STATE $\v_t \gets \arg\min\nolimits_{\v\in\mK_s}\langle{\v-\x_t,\nabla{}f(\x_t)}\rangle + 2s\beta\eta\Vert{\v-\widehat{\x}_t}\Vert_2^2$ \\
\COMMENT{equivalently, $\v_t\gets\Pi_{\mK_s}[\widehat{\x}_t - \frac{1}{4s\beta\eta}\nabla{}f(\x_t)]$}
\STATE set $\gamma_t\in[0,1]$ either by line-search: $\gamma_t\gets\arg\min_{\gamma\in[0,1]}f((1-\gamma)\x_t + \gamma\v_t)$ OR use a pre-defined $\gamma_t\in[0,1]$
\STATE $\x_{t+1} \gets (1-\gamma_t)\x_t + \gamma_t\v_t$
\ENDFOR
\end{algorithmic}
\end{algorithm} 

\begin{theorem}\label{thm:conv}
Consider the iterates $(\x_t)_{t\geq 1}$ of Algorithm \ref{alg:weakProxFW} with a fixed step-size $\eta =\frac{\alpha_2}{4\beta(8s+s^*_{\mK})}\in[0,1]$ and when $\gamma_t$ is set either via line-search or to the pre-defined value $\gamma_t = \eta$ for all $t$. Then,
\begin{align*}
\forall t\geq 1: \quad f(\x_{t+1}) - f^* \leq \left({f(\x_t) - f^*}\right)\left({1 - \frac{\alpha_2}{8\beta(8s+s^*_{\mK})}}\right).
\end{align*} 
\end{theorem}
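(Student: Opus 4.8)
The plan is to prove the statement as a single-step contraction. The three ingredients of Algorithm~\ref{alg:weakProxFW} dictate the route: use $\beta$-smoothness of $f$ w.r.t. $\Vert{\cdot}\Vert$ to upper bound $f(\x_{t+1})$, use the optimality of the proximal point $\v_t$ over $\mK_s$ (compared against $\x^*\in\mK_s$) to bring in $\x^*$, use convexity of $f$ to turn the gradient inner product into a function-value gap, and finally use the quadratic growth bound~\eqref{eq:qg} to close the recursion. The step size $\eta$ is reverse-engineered at the very end so that the $\eta^2$ error term equals exactly half the leading $\eta$ term.

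Carrying this out: since $\x_{t+1}=(1-\eta)\x_t+\eta\v_t$ and all iterates lie in $\mK$, smoothness gives $f(\x_{t+1})\le f(\x_t)+\eta\langle{\nabla f(\x_t),\v_t-\x_t}\rangle+\tfrac{\beta\eta^2}{2}\Vert{\v_t-\x_t}\Vert^2$. Because $\x^*\in\mK_s$, plugging $\v=\x^*$ into the subproblem defining $\v_t$ yields $\langle{\v_t-\x_t,\nabla f(\x_t)}\rangle+s\beta\eta\Vert{\v_t-\widehat{\x}_t}\Vert_2^2\le\langle{\x^*-\x_t,\nabla f(\x_t)}\rangle+s\beta\eta\Vert{\x^*-\widehat{\x}_t}\Vert_2^2$; substituting this in and using convexity, $\langle{\nabla f(\x_t),\x^*-\x_t}\rangle\le f^*-f(\x_t)$, gives
\[
f(\x_{t+1})-f^*\;\le\;(1-\eta)(f(\x_t)-f^*)+\beta\eta^2\Big(s\Vert{\x^*-\widehat{\x}_t}\Vert_2^2-s\Vert{\v_t-\widehat{\x}_t}\Vert_2^2+\tfrac12\Vert{\v_t-\x_t}\Vert^2\Big).
\]

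The hard part, and the step I expect to be the main obstacle, is bounding the parenthesized error by $(4s+s^*_{\mK})\Vert{\x_t-\x^*}\Vert_2^2$: the difficulty is that $\Vert{\v_t-\x_t}\Vert$ lives in the ``bad'' norm $\Vert{\cdot}\Vert$ while $\x_t\notin\E_s$ in general. I would split $\Vert{\v_t-\x_t}\Vert\le\Vert{\v_t-\widehat{\x}_t}\Vert+\Vert{\widehat{\x}_t-\x_t}\Vert$ and invoke both parts of Assumption~\ref{ass:nonStand}: item~\ref{item:ass:ns:1} gives $\Vert{\v_t-\widehat{\x}_t}\Vert\le\sqrt{s}\,\Vert{\v_t-\widehat{\x}_t}\Vert_2$ (both points are in $\E_s$), while item~\ref{item:ass:ns:3} says $\widehat{\x}_t$ is simultaneously a $\Vert{\cdot}\Vert$-nearest point of $\E_s$ and a $\Vert{\cdot}\Vert_2$-nearest point of $\mK_s$ to $\x_t$; using $\x^*\in\mK_s\subseteq\E_s$ and item~\ref{item:ass:ns:2} this gives $\Vert{\widehat{\x}_t-\x_t}\Vert\le\Vert{\x^*-\x_t}\Vert\le\sqrt{s^*_{\mK}}\,\Vert{\x^*-\x_t}\Vert_2$ and $\Vert{\widehat{\x}_t-\x_t}\Vert_2\le\Vert{\x^*-\x_t}\Vert_2$ (hence $\Vert{\widehat{\x}_t-\x^*}\Vert_2\le2\Vert{\x_t-\x^*}\Vert_2$). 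Then $\tfrac12\Vert{\v_t-\x_t}\Vert^2\le\tfrac12\big(\sqrt{s}\,\Vert{\v_t-\widehat{\x}_t}\Vert_2+\sqrt{s^*_{\mK}}\,\Vert{\x^*-\x_t}\Vert_2\big)^2\le s\Vert{\v_t-\widehat{\x}_t}\Vert_2^2+s^*_{\mK}\Vert{\x^*-\x_t}\Vert_2^2$; the first summand cancels exactly against the negative term $-s\Vert{\v_t-\widehat{\x}_t}\Vert_2^2$ from the proximal step, and with $s\Vert{\x^*-\widehat{\x}_t}\Vert_2^2\le 4s\Vert{\x_t-\x^*}\Vert_2^2$ the error is at most $(4s+s^*_{\mK})\Vert{\x_t-\x^*}\Vert_2^2$. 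Making the constant come out to \emph{exactly} $4s+s^*_{\mK}$ is the delicate part: one must measure $\Vert{\widehat{\x}_t-\x_t}\Vert$ through item~\ref{item:ass:ns:3} rather than through items~\ref{item:ass:ns:1}--\ref{item:ass:ns:2}, which would cost an extra $\sqrt{s}$, and one must choose the Young-type split so that the $\Vert{\v_t-\widehat{\x}_t}\Vert_2^2$ contribution is exactly absorbed.

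Finally, quadratic growth~\eqref{eq:qg} replaces $\Vert{\x_t-\x^*}\Vert_2^2$ by $\tfrac{2}{\alpha_2}(f(\x_t)-f^*)$, giving $f(\x_{t+1})-f^*\le\big(1-\eta+\tfrac{2\beta\eta^2(4s+s^*_{\mK})}{\alpha_2}\big)(f(\x_t)-f^*)$; substituting $\eta=\tfrac{\alpha_2}{4\beta(4s+s^*_{\mK})}$ makes the $\eta^2$ term equal to $\tfrac{\eta}{2}$, yielding the claimed contraction factor $1-\tfrac{\eta}{2}=1-\tfrac{\alpha_2}{8\beta(4s+s^*_{\mK})}$. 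One should also check that this $\eta\le1$, so that $\x_{t+1}$, being a convex combination of $\x_t\in\mK$ and $\v_t\in\mK_s\subseteq\mK$, remains feasible; this follows easily from $\beta$-smoothness and quadratic growth together with $s,s^*_{\mK}\ge1$.
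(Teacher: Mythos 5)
Your proof is correct and follows essentially the same route as the paper's: smoothness of $f$, comparison of the proximal subproblem's objective at $\v_t$ and at $\x^*\in\mK_s$, the triangle-inequality split through $\widehat{\x}_t$ combined with the three items of Assumption~\ref{ass:nonStand} to arrive at the coefficient $\beta\eta^2(4s+s^*_{\mK})\Vert{\x^*-\x_t}\Vert_2^2$, and then convexity plus quadratic growth with the same step-size calculation. The only cosmetic difference is that you retain the negative term $-s\beta\eta^2\Vert{\v_t-\widehat{\x}_t}\Vert_2^2$ and cancel it exactly against the Young-type split of $\tfrac12\Vert{\v_t-\x_t}\Vert^2$, whereas the paper instead uses the proximal optimality to replace $\Vert{\v_t-\widehat{\x}_t}\Vert_2^2$ by $\Vert{\x^*-\widehat{\x}_t}\Vert_2^2$; both give the identical constant and contraction factor.
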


Before we prove the theorem let us make an important remark.
\begin{remark}\label{remark:stepsize}
Note that with the value of $\eta$ listed in the theorem, the update for $\v_t$ depends on the coefficient $s\beta\eta = \frac{s\alpha_2}{4(8s+s_{\mK}^*)}$. First, this is independent of $\beta$. Second, as we shall detail in Section \ref{sec:apps}, in both the context of Problem \eqref{eq:L1ProbInt} and Problem \eqref{eq:NNProbInt}, we shall have $s_{\mK}^* = \Theta(s)$. Thus, aside from the sparsity level $s$, this coefficient depends on a single parameter which is the quadratic growth constant $\alpha_2$. While setting this constant can be difficult in general, since Algorithm \ref{alg:weakProxFW} is a descent method (as the proof will establish) there is a simple practical approach to circumvent this difficulty: on each iteration, try a few possible values, compute $\v_t$ for each possible value and take the one which reduces the function value the most (in particular such computations can run in parallel). 
\end{remark}
\begin{proof}[Proof of Theorem \ref{thm:conv}]
Fix some iteration $t$ of the algorithm. Whether $\gamma_t$ is chosen via line-search or using the pre-defined value $\eta$ stated in the theorem, the Lipschitz continuity of the gradient implies the following well known inequality:
\begin{align*}
f(\x_{t+1})&\leq f(\x_t) + \eta\langle{\v_t - \x_t, \nabla{}f(\x_t)}\rangle + \frac{\beta\eta^2}{2}\Vert{\v_t-\x_t}\Vert^2 \\
&\leq f(\x_t) + \eta\langle{\v_t - \x_t, \nabla{}f(\x_t)}\rangle + \beta\eta^2\Vert{\v_t-\widehat{\x}_t}\Vert^2 + \beta\eta^2\Vert{\widehat{\x}_t-\x_t}\Vert^2,
\end{align*}
where the second inequality is due to the triangle inequality and the inequality $(a+b)^2 \leq 2a^2+2b^2$.

Since $\widehat{\x}_t$ and $\v_t$ are in $\E_s$, using Item \ref{item:ass:ns:1} in Assumption \ref{ass:nonStand} we have that,
\begin{align*}
f(\x_{t+1})&\leq f(\x_t) + \eta\langle{\v_t - \x_t, \nabla{}f(\x_t)}\rangle + 2\beta\eta^2s\Vert{\v_t-\widehat{\x}_t}\Vert_2^2 + \beta\eta^2\Vert{\widehat{\x}_t-\x_t}\Vert^2,
\end{align*}
In the following let $\x_t^*=\arg\min_{\x\in\mX^*}\Vert{\x-\x_t}\Vert_2$. Using the definition of $\v_t$ in the algorithm 
we have that
\begin{align*}
f(\x_{t+1}) &\leq f(\x_t) + \eta\langle{\x_t^*- \x_t, \nabla{}f(\x_t)}\rangle + 2\beta\eta^2s\Vert{\x_t^*-\widehat{\x}_t}\Vert_2^2 + \beta\eta^2\Vert{\widehat{\x}_t-\x_t}\Vert^2.
\end{align*}
Using the definition of $\widehat{\x}_t$ in the algorithm with the assumption that $\x_t^*\in\mK_s\subseteq\E_s$, and then Item  \ref{item:ass:ns:2} in Assumption \ref{ass:nonStand} we have that,
\begin{align*}
\Vert{\widehat{\x}_t-\x_t}\Vert^2 \leq \Vert{\x_t^*-\x_t}\Vert^2  \leq s_{\mK}^*\Vert{\x_t^*-\x_t}\Vert_2^2. 
\end{align*}
Combining the last two inequalities we have that,
\begin{align*}
f(\x_{t+1}) &\leq f(\x_t) + \eta\langle{\x_t^*- \x_t, \nabla{}f(\x_t)}\rangle + 2\beta\eta^2s\Vert{\x_t^*-\widehat{\x}_t}\Vert_2^2 + \beta\eta^2s^*_{\mK}\Vert{\x_t^*-\x_t}\Vert_2^2 \\
&\leq f(\x_t) + \eta\langle{\x_t^*- \x_t, \nabla{}f(\x_t)}\rangle + \beta\eta^2(4s + s^*_{\mK})\Vert{\x_t^*-\x_t}\Vert_2^2 + 4\beta\eta^2s\Vert{\widehat{\x}_t-\x_t}\Vert_2^2 \\
&\leq f(\x_t) + \eta\langle{\x_t^*- \x_t, \nabla{}f(\x_t)}\rangle + \beta\eta^2(8s + s^*_{\mK})\Vert{\x_t^*-\x_t}\Vert_2^2,
\end{align*}
where the last inequality follows from the definition of $\widehat{\x}_t$.

Using both the convexity and quadratic growth  of $f$ (Eq. \eqref{eq:qg}), we have that
\begin{align*}
f(\x_{t+1})  &\leq f(\x_t) -\eta\left({f(\x_t) - f^*}\right) + \frac{2\beta\eta^2(8s + s^*_{\mK})}{\alpha_2}\left({f(\x_t) - f^*}\right).
\end{align*} 
Subtracting $f^*$ from both sides and plugging-in our choice of step-size $\eta$, completes the proof.

\end{proof}

\section{Applications}\label{sec:apps}

\subsection{$\ell_1$-ball setup}\label{sec:appsL1}
We recall the first problem in our Introduction:
\begin{align}\label{eq:L1Prob}
\min_{\x\in\reals^n:~\Vert{\x}\Vert_1 \leq R}f(\x),
\end{align}
Here we instantiate our Algorithm \ref{alg:weakProxFW} by setting $\Vert{\cdot}\Vert$ to be the $\ell_1$ norm $\Vert{\cdot}\Vert_1$ in $\reals^n$ (and accordingly $\Vert{\cdot}\Vert_* = \Vert{\cdot}\Vert_{\infty}$).
As already discussed in the Introduction, we assume there is a positive integer $s<<n$ such that every optimal solution $\x^*$ satisfies $\nnz(\x^*)   \leq s$ and also $\Vert{\x^*}\Vert_1 = R$ (i.e., all optimal solutions lie on a low-dimensional face of the $\ell_1$ ball).  

We shall require the following simple lemma (a proof is given in the appendix for completeness).
\begin{lemma}\label{lem:L1norms}
Let $\x,\y\in\reals^n$ such that $\Vert{\y}\Vert_1 \leq \Vert{\x}\Vert_1$. Then,
\begin{align*}
\Vert{\x-\y}\Vert_1 \leq 2\sqrt{\nnz(\x)}\Vert{\x-\y}\Vert_2.
\end{align*}
\end{lemma}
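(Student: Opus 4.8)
The plan is to split the index set according to the support of $\x$ and control each piece separately. Let $S = \supp(\x)$, so $|S| = \nnz(\x)$, and write $S^c$ for its complement. On the coordinates in $S$, the Cauchy--Schwarz inequality immediately gives $\Vert{(\x-\y)_S}\Vert_1 \le \sqrt{|S|}\,\Vert{(\x-\y)_S}\Vert_2 \le \sqrt{\nnz(\x)}\,\Vert{\x-\y}\Vert_2$, so this part already meets the bound with room to spare. The issue is the off-support part: on $S^c$ we have $(\x-\y)_{S^c} = -\y_{S^c}$, and a priori $\y$ could have large mass there, so we cannot bound $\Vert{\y_{S^c}}\Vert_1$ by a small-support Cauchy--Schwarz argument directly.

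The key observation that rescues this is the hypothesis $\Vert{\y}\Vert_1 \le \Vert{\x}\Vert_1$. Since $\x$ is supported on $S$, this reads $\Vert{\y_S}\Vert_1 + \Vert{\y_{S^c}}\Vert_1 \le \Vert{\x_S}\Vert_1$, hence $\Vert{\y_{S^c}}\Vert_1 \le \Vert{\x_S}\Vert_1 - \Vert{\y_S}\Vert_1 \le \Vert{\x_S - \y_S}\Vert_1 = \Vert{(\x-\y)_S}\Vert_1$ by the triangle inequality. Therefore the off-support $\ell_1$-mass of the difference is dominated by the on-support $\ell_1$-mass of the difference:
\begin{align*}
\Vert{(\x-\y)_{S^c}}\Vert_1 = \Vert{\y_{S^c}}\Vert_1 \le \Vert{(\x-\y)_S}\Vert_1.
\end{align*}

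Putting the two pieces together, $\Vert{\x-\y}\Vert_1 = \Vert{(\x-\y)_S}\Vert_1 + \Vert{(\x-\y)_{S^c}}\Vert_1 \le 2\Vert{(\x-\y)_S}\Vert_1 \le 2\sqrt{\nnz(\x)}\,\Vert{(\x-\y)_S}\Vert_2 \le 2\sqrt{\nnz(\x)}\,\Vert{\x-\y}\Vert_2$, which is exactly the claimed inequality. I do not expect any real obstacle here; the only mildly delicate point is the reduction of the off-support mass to the on-support mass via the norm hypothesis, and everything else is Cauchy--Schwarz plus the triangle inequality. One could alternatively phrase the whole thing as: $\x - \y$ is an element of the cone $\{\z : \Vert{\z_{S^c}}\Vert_1 \le \Vert{\z_S}\Vert_1\}$ (the standard compressed-sensing cone), on which the $\ell_1$ and $\ell_2$ norms are comparable up to the factor $2\sqrt{|S|}$ — but the elementary two-line argument above is cleaner for this self-contained lemma.
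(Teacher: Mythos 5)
Your proof is correct and is essentially identical to the paper's own argument: both split the difference along $S=\supp(\x)$, use $\Vert{\y}\Vert_1\leq\Vert{\x}\Vert_1$ together with the triangle inequality to dominate the off-support mass $\Vert{\y_{S^c}}\Vert_1$ by $\Vert{(\x-\y)_S}\Vert_1$, and finish with Cauchy--Schwarz on the $\nnz(\x)$ supported coordinates. No differences worth noting.
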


Let $s$ be such that $\nnz(\x^*) \leq s << n$ and let us set  $\E_s  = \{\x\in\reals^n~|~\nnz(\x) \leq s\}$ which implies that $\mK_s = \mK\cap\E_s =  \{\x\in\reals^n~|~\Vert{\x}\Vert_1 \leq R, ~\nnz(\x) \leq s\}$, i.e., $\mK_s$ is the restriction of the $\ell_1$ ball to $s$-sparse vectors. In light of Lemma \ref{lem:L1norms} and the assumption that $\Vert{\x^*}\Vert_1 = R$ it is straightforward to verify that the conditions in Assumption \ref{ass:nonStand} indeed hold with constants $s$ and $s^*_{\mK} = 4s$.

Note that computing the hard-thresholding step in Algorithm \ref{alg:weakProxFW}  (the point $\widehat{\x}_t$) is straightforward by zeroing-out all but the largest (in absolute value) entries in $\x_t$.

Computing the sparse proximal step, the point $\v_t$, by computing the Euclidean projection of the point $\z_t = \widehat{\x}_t - \frac{1}{4s\beta\eta}\nabla{}f(\x_t)$ onto $\mK_s$, is done by computing the Euclidean projection of the vector in $\reals^s$ which corresponds to the $s$-largest (in absolute value) entries in $\z_t$ onto the $s$-dimensional $\ell_1$-ball of radius $R$, and then appending zeros (in the entries which correspond to the $n-s$ lower entries in $\z_t$) to obtain again a point in the $n$-dimensional $\ell_1$-ball of radius $R$. See \cite{beck2016minimization} for  a proof of the correctness of this procedure. 

Overall, aside from the gradient computation, each iteration of Algorithm \ref{alg:weakProxFW} requires $O(n\log{}s)$ time (which is the time to find the $s$ largest entries in an array of length $n$ --- the most time consuming operation).

\begin{corollary}\label{cor:L1}
When applied to Problem \eqref{eq:L1Prob}, and assuming any optimal solution $\x^*$  satisfies $\nnz(\x^*) \leq s << n$ and $\Vert{\x^*}\Vert_1=R$, Algorithm \ref{alg:weakProxFW} admits an implementation that finds an $\epsilon$-approximated solution (in function value) in $O\left({\frac{\beta_1s}{\alpha_2}\log\frac{1}{\epsilon}}\right)$ iterations, and aside from gradient computations, the runtime of each iteration is $O(n\log{s})$. In particular, if $f$ is (convex) quadratic of the form $f(\x) = \frac{1}{2}\x^{\top}\A\x + \b^{\top}\x$, the overall runtime of each iteration is $O(sn)$.
\end{corollary}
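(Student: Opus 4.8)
The plan is to apply Theorem \ref{thm:conv} directly to the instantiation described just above the corollary, after checking the hypotheses. So the first step is to verify Assumption \ref{ass:nonStand} holds for the choices $\Vert{\cdot}\Vert = \Vert{\cdot}\Vert_1$, $\E_s = \{\x\in\reals^n : \nnz(\x)\le s\}$, and $\mK_s = \mK\cap\E_s$. Item \ref{item:ass:ns:1} is the standard fact that for $s$-sparse vectors (and a difference of two $s$-sparse vectors is $2s$-sparse — so actually the relevant constant needs a little care) the $\ell_1$ norm is dominated by $\sqrt{s}$ (or $\sqrt{2s}$) times the $\ell_2$ norm; I would just quote this. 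Item \ref{item:ass:ns:2} is exactly Lemma \ref{lem:L1norms} applied with $\x = \x^*$, $\y = \x$: since every feasible $\x$ satisfies $\Vert{\x}\Vert_1 \le R = \Vert{\x^*}\Vert_1$, the lemma gives $\Vert{\x-\x^*}\Vert_1 \le 2\sqrt{\nnz(\x^*)}\Vert{\x-\x^*}\Vert_2 \le 2\sqrt{s}\Vert{\x-\x^*}\Vert_2$, i.e. $s^*_{\mK} = 4s$ (the text says $2s$, which corresponds to absorbing the factor $2$ differently — I would match whichever convention makes the constants in Theorem \ref{thm:conv} come out to the stated bound). Item \ref{item:ass:ns:3}, that Euclidean projection onto $\mK_s$ factors through hard-thresholding onto $\E_s$, is the content of the cited result \cite{beck2016minimization} on projecting onto the sparsity-constrained $\ell_1$-ball.

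Once the assumption is verified, Theorem \ref{thm:conv} gives geometric decrease of $f(\x_t) - f^*$ with ratio $1 - \frac{\alpha_2}{8\beta(4s+s^*_{\mK})}$. With $s^*_{\mK} = \Theta(s)$ this ratio is $1 - \Theta(\alpha_2/(\beta_1 s))$, so the number of iterations to reach accuracy $\epsilon$ is $O\!\left(\frac{\beta_1 s}{\alpha_2}\log\frac{1}{\epsilon}\right)$ by the usual $\log(1-x)\le -x$ estimate (I also need $f(\x_1)-f^*$ to be bounded, which follows from compactness of $\mK$ and continuity of $f$, absorbed into the $\log$). Here $\beta = \beta_1$ is the $\ell_1$-Lipschitz constant of the gradient by our choice of norm, which is why the condition number takes the advertised mixed-norm form.

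For the per-iteration runtime claim, I would itemize the three operations in Algorithm \ref{alg:weakProxFW}: (i) the hard-thresholding step $\widehat{\x}_t = \Pi_{\E_s}[\x_t]$ amounts to finding the $s$ largest-magnitude entries of $\x_t$, which is $O(n\log s)$ with a heap (or $O(n)$ with selection, but $O(n\log s)$ suffices); (ii) the sparse proximal step $\v_t = \Pi_{\mK_s}[\widehat{\x}_t - \frac{1}{2s\beta\eta}\nabla f(\x_t)]$, which by the factorization above reduces to an $s$-dimensional $\ell_1$-ball projection on the top-$s$ coordinates of $\z_t$ — this is $O(s\log s)$ (sort-based) or $O(s)$ expected — plus again the $O(n\log s)$ to locate those top-$s$ coordinates; (iii) the convex combination $\x_{t+1} = (1-\eta)\x_t + \eta\v_t$, which is $O(n)$. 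So aside from forming $\nabla f(\x_t)$, each iteration costs $O(n\log s)$. For the quadratic case $f(\x) = \x^\top\A\x + \b^\top\x$, the gradient is $\nabla f(\x_t) = 2\A\x_t + \b$; rather than recompute from scratch, maintain $\g_t = \nabla f(\x_t)$ incrementally via $\g_{t+1} = \g_t + 2\eta\A(\v_t - \x_t)$, and since $\v_t$ is $s$-sparse while the other correction involves $\x_t$... here I need to be slightly careful: $\x_t$ itself is generally dense, so $\A(\v_t - \x_t)$ is not obviously cheap. The resolution is that $\x_{t+1} - \x_t = \eta(\v_t - \x_t)$, and one instead maintains the gradient as $\g_{t+1} = (1-\eta)\g_t + \eta(2\A\v_t + \b)$, exploiting that $\A\v_t$ costs only $O(sn)$ because $\v_t$ has at most $s$ nonzeros (summing $s$ scaled columns of $\A$). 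That gives the $O(sn)$ per-iteration bound.

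The main obstacle — really the only nonroutine point — is pinning down the constants $s$ and $s^*_{\mK}$ so that the ratio in Theorem \ref{thm:conv} genuinely collapses to $1 - \Theta(\alpha_2/(\beta_1 s))$; concretely, one must be consistent about whether a difference of two $s$-sparse vectors is treated as $s$-sparse or $2s$-sparse in Item \ref{item:ass:ns:1}, and track the factor of $2$ from Lemma \ref{lem:L1norms} through Item \ref{item:ass:ns:2}. Both choices only change the hidden constant, so the stated $O(\cdot)$ is unaffected, but I would state explicitly which convention I adopt. The rest — the iteration count from a geometric ratio, and the runtime accounting — is bookkeeping, with the one genuine subtlety being the incremental gradient maintenance trick in the quadratic case noted above.
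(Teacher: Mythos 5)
Your proposal is correct and follows essentially the same route as the paper: verify Assumption \ref{ass:nonStand} via Lemma \ref{lem:L1norms} with constants of order $s$, invoke Theorem \ref{thm:conv} to get the geometric rate, and account for the per-iteration cost via top-$s$ selection plus the $s$-dimensional $\ell_1$-ball projection and, for quadratics, the $O(sn)$ gradient update exploiting the $s$-sparsity of $\v_t$ (the paper only sketches this last point in the introduction). Your observation that Lemma \ref{lem:L1norms} literally yields $s^*_{\mK}=4s$ rather than the paper's stated $2s$ is a fair catch, and as you note it only changes the hidden constant.
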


\begin{remark}\label{rem:ext}
It is straightforward to extend the result above to the case in which $\x$ is constrained to be in $\mK = \mC\cap\{\x~|~\Vert{\x}\Vert_1 \leq R\}$, where $\mC\subset\reals^n$ is closed and convex, as long as every optimal solution $\x^*$ satisfies $\Vert{\x^*}\Vert_1 = R$. For instance when $\mC = \{\x~|~\x\geq 0, \sum_{i=1}^n\x_i = R\}$, we have that $\mK$ is simply a scaled simplex for which the assumption $\Vert{\x^*}\Vert_1 = R$ always holds. Moreover, when $\mC$ is closed under permutation of coordinates (e.g., an $\ell_p$ ball or its non-negative restriction), the sparse proximal step (computation of $\v_t$ in Algorithm \ref{alg:weakProxFW}) admits efficient implementation as detailed in  \cite{beck2016minimization}.
\end{remark}

\subsection{Nuclear norm ball setup}\label{sec:appsNN}
We recall the second problem in our Introduction:
\begin{align}\label{eq:NNProb}
\min_{\X\in\reals^{m\times n}:~\Vert{\X}\Vert_{1} \leq R}f(\X).
\end{align}
Here we instantiate our Algorithm \ref{alg:weakProxFW} by setting $\Vert{\cdot}\Vert$ to be the matrix nuclear norm $\Vert{\X}\Vert_{1} = \sum_{i=1}^{\min\{m,n\}}\vert{\sigma_i(\X)}\vert$, and accordingly $\Vert{\X}\Vert_* = \Vert{\X}\Vert_{\op} = \max_{i\in[\min\{m,n\}]}\sigma_i(\X)$ which is the operator norm (aka the spectral norm). Here also we let $\Vert{\cdot}\Vert_2$ denote the Euclidean (Frobenius) matrix norm. As already discussed in the Introduction, we assume there is a positive integer $s<<\min\{m,n\}$ such that every optimal solution $\X^*$ satisfies $\rank(\X^*)   \leq s$ and also $\Vert{\X^*}\Vert_1 = R$ (i.e., all optimal solutions lie on a low-dimensional face of the nuclear norm ball).  
The following lemma is analogous to Lemma \ref{lem:L1norms}. A proof is given in the appendix for completeness.
\begin{lemma}\label{lem:NNnorms}
Let $\X, \Y\in\mathbb{R}^{m\times n}$ such that $\X \neq \mathbf{0}$, $\Vert{\Y}\Vert_{1} \leq \Vert{\X}\Vert_{1}$. Then
\begin{align*}
\|\X-\Y\|_{1}\le 2\sqrt{2\mathrm{rank}(\X)}\,\|\X-\Y\|_2.
\end{align*}
\end{lemma}

Let $s$ be such that $\rank(\X^*) \leq s << n$ and let us set  $\E_s  = \{\X\in\reals^{m\times n}~|~\rank(\X) \leq s\}$ which implies that $\mK_s = \mK\cap\E_s = \{\X\in\reals^{m\times n}~|~\Vert{\X}\Vert_1 \leq R, ~\rank(\X) \leq s\}$, i.e., $\mK_s$ is the restriction of nuclear norm ball to matrices with rank at most $s$. In light of Lemma \ref{lem:NNnorms} and the assumption that $\Vert{\X^*}\Vert_1 = R$ it is straightforward to verify that the conditions in Assumption \ref{ass:nonStand} indeed hold with constants $s$ and $s^*_{\mK} = 8s$.

Here the hard-thresholding step in Algorithm \ref{alg:weakProxFW}  (computing the point $\widehat{\x}_t$) is done by computing the rank-$s$ approximation of the current iterate $\x_t$, which requires computing only the top $s$ components in the SVD of $\x_t$ (and discarding the rest).

Computing the sparse proximal step, the point $\v_t$, is done as follows: compute the $s$ top components (corresponding to the largest singular values) in the SVD of the matrix $\z_t = \widehat{\x}_t - \frac{1}{4s\beta\eta}\nabla{}f(\x_t)$,  and denote it as $\sum_{i=1}^s\sigma_i\u^{(i)}\v^{(i)\top}$, $\u^{(i)}\in\reals^m, \v^{(i)}\in\reals^n, i=1,\dots,s$. Then, project the vector of singular values $(\sigma_1,\dots,\sigma_s)$ onto the $s$-dimensional downward closed simplex $\mS_R : = \{\y\in\reals^s~|~\y\geq 0,~\sum_{i=1}^s\y_i \leq R\}$ to obtain the new vector of singular values $(\sigma_1',\dots,\sigma_s')$. Then, we set $\v_t = \sum_{i=1}^s\sigma_i'\u^{(i)}\v^{(i)\top}$ (i.e., the singular vectors remain unchanged). For the correctness of this procedure see Lemma \ref{lem:matProj} in the appendix. 

Overall, aside from the gradient computation, the runtime of each iteration of Algorithm \ref{alg:weakProxFW} is dominated by  computing two rank-$s$ SVDs of a $m\times n$ matrix, which is far more efficient than a full-rank SVD when $s<<\min\{m,n\}$.

\begin{corollary}\label{cor:NN}
When applied to Problem \eqref{eq:NNProb}, and assuming any optimal solution $\X^*$ satisfies $\rank(\X^*) \leq s << \min\{m,n\}$ and $\Vert{\X^*}\Vert_1=R$, Algorithm \ref{alg:weakProxFW} admits an implementation that finds an $\epsilon$-approximated solution (in function value) in $O\left({\frac{\beta_1s}{\alpha_2}\log\frac{1}{\epsilon}}\right)$ iterations, and aside from the gradient computations, the runtime of each iteration is dominated by two rank-$s$ SVDs of $m\times n$ matrices. 
\end{corollary}


\begin{remark}
As in Remark \ref{rem:ext}, here also we can extend Problem \eqref{eq:NNProb} to include slightly more involved constraints of the form $\mK = \mC\cap\{\X\in\reals^{m\times n}~|~\Vert{\X}\Vert_1 \leq R\}$, where $\mC\subset\reals^{m\times n}$ is closed and convex, as long as $\Vert{\X^*}\Vert_1 = R$ for any optimal solution $\X^*$. For instance, setting $m=n$ and $\mC = \mbS^n_+$ --- the positive semidefinite (PSD) cone, we can obtain the analogue of Problem \eqref{eq:NNProb} for optimization over the trace-bounded PSD cone. In particular, one can extend Lemma \ref{lem:matProj} (efficient sparse prox computations) accordingly.
\end{remark}

\section{Numerical Evidence}\label{sec:exp}
We present simple numerical simulations  on the $\ell_1$-constrained Problem \eqref{eq:L1ProbInt}. Similarly to the example in Eq. \eqref{eq:badFunc}, we set $f(\x) = \frac{1}{2}(\x-\x^*)^{\top}(\I + 3\cdot\mathbf{1}\mathbf{1}^{\top})(\x-\x^*)$, where $\mathbf{1}$ denotes the all-ones vector. For this problem we have that the $\ell_2$-quadratic growth (in fact, strong convexity) constant is $\alpha_2 = 1$, and the $\ell_2$ and $\ell_1$ Lipschitz continuity constants of the gradient are  $\beta_2 = 1+3n$ and $\beta_1 = 4$, respectively. In all experiments we set the dimension to $n=1000$, the radius of the $\ell_1$ ball to $R=10$. The optimal solution $\x^*$ is drawn at random so that each non-zero entry (the non-zero locations are also randomized) is set to $\pm\frac{R}{\nnz(\x^*)}$ with equal probability, where $\nnz(\x^*)$ is set in advance, and for each level of sparsity we run 10 i.i.d. experiments (where in each one we resample $\x^*$) and display the average approximation error vs. number of iterations in Figure \ref{fig:exp}. 

Aside from our Algorithm \ref{alg:weakProxFW}, we implemented three benchmarks: I. the away-steps Frank-Wolfe algorithm with line-search that for this setting has worst-case convergence rate $O\left({\frac{\beta_1sR^2}{\alpha_2}\log{1/\epsilon}}\right)$ (see Section \ref{sec:afw}), II. a state-of-the-art Euclidean accelerated gradient method for smooth and strongly convex optimization known as V-FISTA \cite{beck2017first}, which has worst-case convergence rate $O((1-\sqrt{\alpha_2/\beta_2})^t)$ (see Theorem 10.42 in \cite{beck2017first}), and III. a restarted non-Euclidean accelerated gradient method with hard-thresholding steps as described in Section \ref{sec:acc} which has convergence rate $\tilde{O}\left({\sqrt{\frac{\beta_1{}s\ln(n)}{\alpha_2}}\log{1/\epsilon}}\right)$ (per Remark \ref{remark:accMD} we implemented Nesterov's Method 5.6 \cite{nesterov2005smooth} as our non-Euclidean accelerated gradient method). For the V-FISTA method we observed that even trying to slightly tune the prescribed theoretical step-size makes the method divergent, and so we stuck to the original implementation in  \cite{beck2017first}. For the accelerated non-Euclidean method we set the length of each epoch (the parameter $K$ in Algorithm \ref{alg:restartAccGD}) to $K = \left\lceil{25\sqrt{\frac{\beta_1{}s}{\alpha_2}}}\right\rceil$ which gave the best results. 

As for our Algorithm \ref{alg:weakProxFW}, we implemented three variants, all of them use exact line-search to determine the parameter $\gamma_t$, but with different choice of step-size $\eta$: I. using the theoretical step-size $\eta$ in Theorem \ref{thm:conv} which, as discussed in Section \ref{sec:appsL1} amounts to setting $\eta = \frac{\alpha_2}{48\beta_1s}$, II. using a larger manually-tuned heuristic  step-size $\eta = \frac{\alpha_2}{2\beta_1s}$, and III. using an automatically-tuned step-size, as discussed in Remark \ref{remark:stepsize}, by trying on each iteration all step-sizes  $\eta = \frac{2^i\alpha_2}{48\beta_1s}$ for $i\in\{0,1,\dots,5\}$ and taking the one which results in the largest decrease in function value. Note that for $i=0$ we get the theoretical step-size and thus this method is guaranteed to converge with at least the rate established in Theorem \ref{thm:conv}.

For all methods that require the knowledge of $s$ (an upper-bound on $\nnz(\x^*)$), which includes our algorithm and the restarted non-Euclidean accelerated gradient method, we set for simplicity $s=\nnz(\x^*)$.

Let us comment on the results displayed in Figure \ref{fig:exp}. First, we see that our Algorithm \ref{alg:weakProxFW}, when implemented exactly according to theory and with a fixed step-size, does not seem to work well in practice (perhaps with the exception of the case $\nnz(\x^*)=10$). This seems to be due to the small theoretical step-size (note the $1/48$ factor in the theoretical choice of $\eta$), which may be an artifact of our analysis. Once we use a manually-tuned step-size or the automatically-tuned step-size (which as stated above has the theoretical guarantee of  Theorem \ref{thm:conv}), we observe state-of-the-art performance, clearly outperforming all benchmarks. Somewhat surprisingly, the restarted non-Euclidean accelerated gradient method fails to substantially outperform the Euclidean V-FISTA method, despite having a theoretically superior convergence rate when $\x^*$ is sparse. Similarly, except for the highly-sparse setting $\nnz(\x^*)=10$, the away-steps Frank-Wolfe also fails to outperform V-FISTA. 



\begin{figure}[H]
\centering
\begin{subfigure}{0.32\textwidth}
    \includegraphics[width=\textwidth]{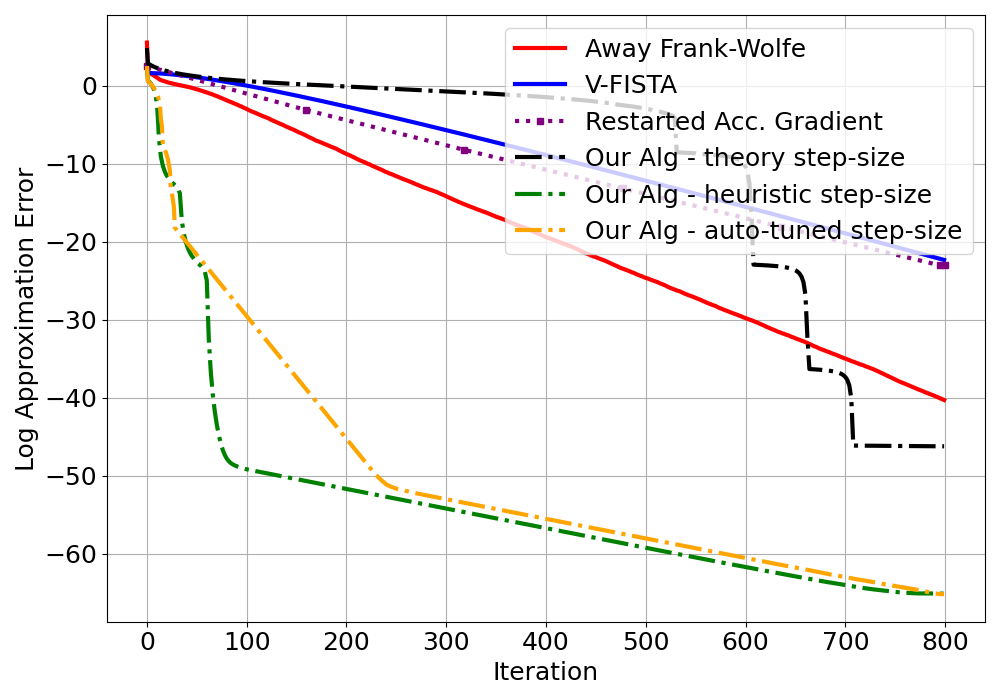}
    \caption{$\nnz(\x^*)=10$}
    \label{fig:first}
\end{subfigure}
\hfill
\begin{subfigure}{0.32\textwidth}
    \includegraphics[width=\textwidth]{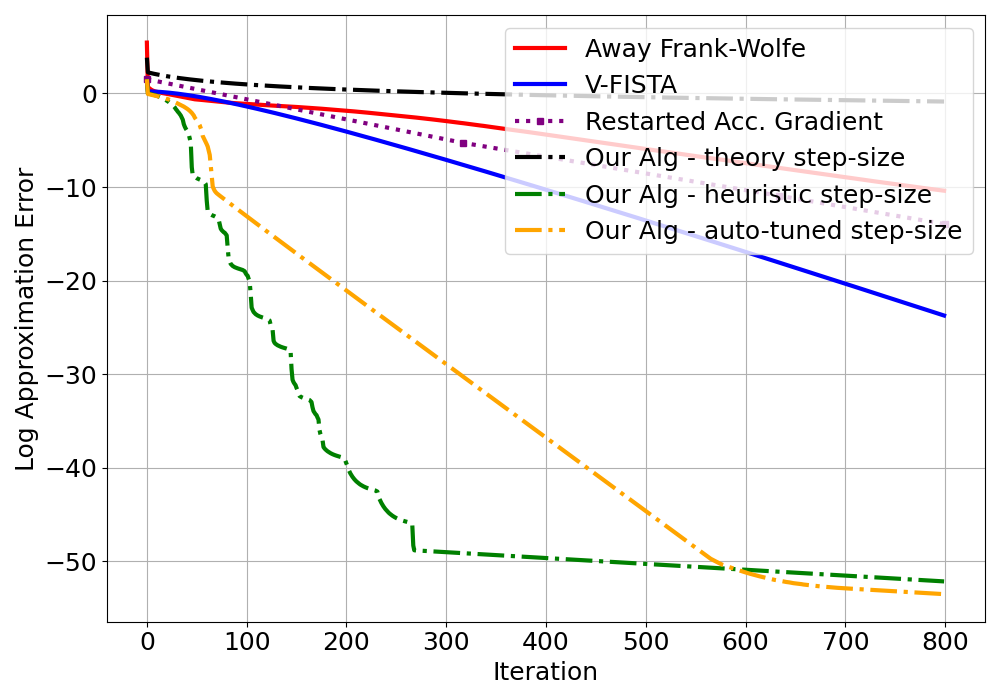}
    \caption{$\nnz(\x^*)=40$}
    \label{fig:second}
\end{subfigure}
\hfill
\begin{subfigure}{0.32\textwidth}
    \includegraphics[width=\textwidth]{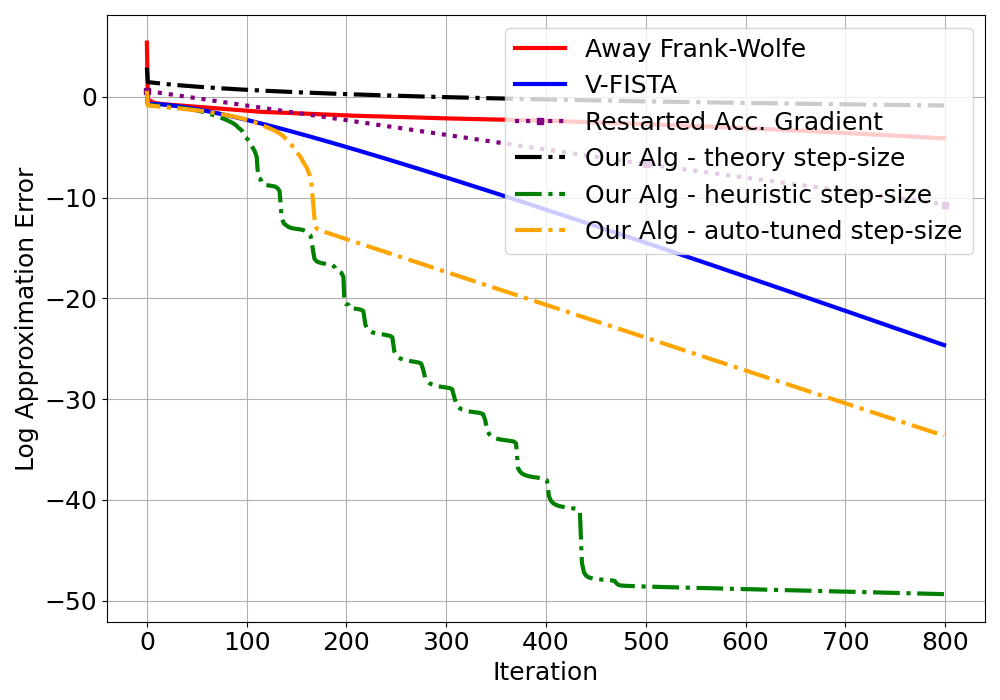}
    \caption{$\nnz(\x^*)=100$}
    \label{fig:third}
\end{subfigure}
        
\caption{Empirical convergence of various algorithms for Problem \eqref{eq:L1ProbInt}.}
\label{fig:exp}
\end{figure}

\section{Open Questions}\label{sec:future}
We conclude with two open questions which we believe are of interest. First, it is interesting if our approach could be extended to also handle norm-regularized problems instead of norm-constrained problems. Second, it is interesting if our approach could be accelerated, i.e., to yield worst-case number of iterations which scales only with $\sqrt{\frac{\beta_1s}{\alpha_2}}$ while maintaining the sparse updates. 

\section*{Acknowledgments} 
This work was funded by the European Union (ERC, ProFreeOpt, 101170791). Views and opinions expressed are however those of the author(s) only and do not necessarily reflect those of the European Union or the European Research Council Executive Agency. Neither the European Union nor the granting authority can be held responsible for them.

We would like to thank Amir Beck and Marc Teboulle for fruitful discussions during the early stages of this work.
\appendix

\section{Away-Steps Frank-Wolfe for Problem \eqref{eq:L1ProbInt}}\label{sec:afw}
In this section we show that the away-steps Frank-Wolfe method \cite{guelat1986some, lacoste2015global} has a linear convergence rate of the form $O\left({\frac{\beta_1sR^2}{\alpha_2}\log{1/\epsilon}}\right)$ for Problem \eqref{eq:L1ProbInt}, assuming all optimal solutions lie on the boundary of the $\ell_1$ ball.

\begin{theorem}[Away-Steps Frank-Wolfe for Problem \eqref{eq:L1ProbInt}]\label{thm:afw}
Consider Problem \eqref{eq:L1ProbInt} and suppose that for any optimal solution $\x^*$ it holds that $\nnz(\x^*) \leq s$ and $\Vert{\x^*}\Vert_1 = R$. Then, the Away-steps Frank-Wolfe algorithm (Algorithm 1 in \cite{lacoste2015global}, see also \cite{guelat1986some}) finds an $\epsilon$-approximated solution (in function value) after at most $O\left({\frac{\beta_1sR^2}{\alpha_2}\log\frac{\beta_1R^2}{\epsilon}}\right)$ iterations. In particular, when $f(\x) = \frac{1}{2}\x^{\top}\A\x + \b^{\top}\x$ ($\A\succeq 0$), the overall runtime is $O\left({\frac{ns\beta_1R^2}{\alpha_2}\log\frac{\beta_1R^2}{\epsilon}}\right)$.
\end{theorem}
\begin{proof}
The theorem follows directly by instantiating Corollary 3.33 in \cite{braun2022conditional}, where we take the underlying norm to be the $\ell_1$ norm, and by using the fact that, as noted in  \cite{braun2022conditional},  the $\ell_1$-pyramidal width of the $\ell_1$ ball is $1$. This leads to an $O\left({\frac{\beta_1R^2}{\alpha_1}\log\frac{\beta_1R^2}{\epsilon}}\right)$ upper-bound on the number of iterations to reach $\epsilon$-error, where $\alpha_1$ is the quadratic growth constant w.r.t. the $\ell_1$-norm. The proof follows since the assumption that all optimal solutions are on the boundary of the ball together with Lemma \ref{lem:L1norms}, implies that $\alpha_1 \geq \frac{\alpha_2}{4s}$.
\end{proof}

\section{Accelerated Rates with Dense Updates --- Recovering the rates in \cite{ilandarideva2024accelerated}}\label{sec:acc}
In this section we give a concise description of the accelerated linear convergence rates reported in \cite{ilandarideva2024accelerated}, which scale only with $\sqrt{\frac{\beta_1{}s}{\alpha_2}}$, but which rely on \textit{non-sparse} updates. While the methods in \cite{ilandarideva2024accelerated} were developed for stochastic optimization, here we are concerned with a purely deterministic setting. For ease of presentation, throughout this section we will focus on Problem \eqref{eq:L1ProbInt} (optimization over $\ell_1$ balls in $\reals^n$) only, however the extension to Problem \eqref{eq:NNProbInt} (optimization over a matrix nuclear norm ball) is straightforward by replacing the entrywise $\ell_1$ norm with the matrix nuclear norm. The method is a straightforward combination of a restarted-accelerated non-Euclidean proximal gradient method due to Nesterov \cite{nesterov2005smooth} with sparse projection steps of the iterates.

Throughout this section we let $\ball_1(R)$ denote the origin-centered $\ell_1$-ball of radius $R$ in $\reals^n$. For ease of presentation, here (and only here) we assume that Problem \eqref{eq:L1ProbInt} admits a unique optimal solution $\x^*$ (this is also assumed in \cite{ilandarideva2024accelerated}) and that $\nnz(\x^*) \leq s << n$, however, here we do not require that the optimal solutions lie on the boundary of the $\ell_1$ ball. 

The following result is well-known, see for instance \cite{ilandarideva2024accelerated} .
\begin{lemma}\label{lem:scnorm}
Given $\x_0\in\ball_1(R)$, consider the prox function 
\begin{align}\label{eq:scnrom:1}
d(\x) := \frac{1}{2}e\log{n}\cdot{}n^{(p-1)(2-p)/p}\Vert{\x-\x_0}\Vert_p^2 \quad  \textrm{for} \quad p = 1 + 1/\log{n}. 
\end{align}
Then, $d(\x)$ is $1$-strongly convex over $\ball_1(R)$ w.r.t. the norm $\Vert{\cdot}\Vert_1$, and for any $\y\in\ball_1(R)$, $d(\y) \leq \frac{e^2\log{n}}{2}\Vert{\y-\x_0}\Vert_1^2$.
\end{lemma}

\begin{algorithm}
\caption{Nesterov's accelerated gradient method (see Method 3.11 in  \cite{nesterov2005smooth})}
\label{alg:accGD}
\begin{algorithmic}[1]
\STATE input: initialization point $\x_0\in\ball_1(R)$, $\ell_1$-smoothness parameter $\beta_1$
\STATE define the function $d(\z)$ as in Eq. \eqref{eq:scnrom:1} and set $\sigma = 1$
\FOR{$k=0,1\dots $}
\STATE $\y_k \gets  \arg\min_{\y\in\ball_1(R)}\langle{\y-\x_k, \nabla{}f(\x_k)}\rangle + \frac{\beta_1}{2}\Vert{\y-\x_k}\Vert_1^2$
\STATE $\z_k \gets \arg\min_{\z\in\ball_1(R)}\left\{{\frac{\beta_1}{\sigma}d(\z) + \sum_{i=0}^k\frac{i+1}{2}\langle{\z-\x_i, \nabla{}f(\x_i)}\rangle}\right\}$ 
\STATE $\x_{k+1} \gets \frac{2}{k+3}\z_k + \frac{k+1}{k+3}\y_k$
\ENDFOR
\end{algorithmic}
\end{algorithm}

\begin{theorem}[Theorem 2 in \cite{nesterov2005smooth}]\label{thm:accGrad}
When applied to Problem \eqref{eq:L1ProbInt}, 
the sequence $(\y_k)_{k \geq 0}$ produced by Algorithm \ref{alg:accGD} satisfies
\begin{align*}
\forall k\geq 0:\qquad f(\y_k) - f(\x^*) \leq \frac{4\beta_1d(\x^*)}{(k+1)(k+2)}.
\end{align*}
\end{theorem}

\begin{algorithm}
\caption{Restarted accelerated gradient method}\label{alg:restartAccGD}
\label{alg:ResAccGD}
\begin{algorithmic}[1]
\STATE input: sparsity parameter $s\in[n]$, epoch length $K$
\STATE $\x_0 \gets$ some point in $\ball_1(R)$ such that $\nnz(\x_0) \leq s$
\FOR{$t=0,1\dots $}
\STATE $\x_{t+1} \gets$ the point $\y_K$ computed by Algorithm \ref{alg:accGD} after $K$ iterations, when initialized with the point $\widehat{\x}_t = \arg\min_{\x\in\reals^n:~\nnz(\x) \leq s}\Vert{\x-\x_t}\Vert_2$ \COMMENT{i.e., $\widehat{\x}_t$ equals $\x_t$ on the $s$ largest (in absolute value) entries, and zero elsewhere}
\ENDFOR
\end{algorithmic}
\end{algorithm} 

\begin{theorem}
Consider applying Algorithm \ref{alg:ResAccGD} to Problem \eqref{eq:L1ProbInt}  with $K \geq \sqrt{\frac{64s\beta_1e^2\ln(n)}{\alpha_2}}-1$. Then,
\begin{align*}
\forall t\geq 0: \qquad f(\x_{t+1}) - f(\x^*) \leq \frac{\alpha_2R^2}{s}2^{-(t+3)}.
\end{align*}
\end{theorem}
Thus, to reach an $\epsilon$-approximated solution (in function value) the number of calls to the first-order oracle of $f$ is $O\left({\sqrt{\frac{\beta_1{}s\ln(n)}{\alpha_2}}\ln\left({\frac{\alpha_2R^2}{s\epsilon}}\right)}\right)$.
\begin{proof}
Fix some iteration $t \geq 0$ of Algorithm \ref{alg:restartAccGD}. Let $d_t(\x)$ denote the function $d(\x)$, as defined in Eq. \eqref{eq:scnrom:1}, with $\x_0 = \widehat{\x}_t$ --- the initialization point in iteration $t$ of  Algorithm \ref{alg:restartAccGD}.
\begin{align*}
\Vert{\x^*-\widehat{\x}_{t+1}}\Vert_1^2 &\underset{(a)}{\leq} 2s\Vert{\x^*-\widehat{\x}_{t+1}}\Vert_2^2 \\
&\leq 4s\Vert{\x^*-\x_{t+1}}\Vert_2^2 + 4s\Vert{\x_{t+1}-\widehat{\x}_{t+1}}\Vert_2^2 \\
&\underset{(b)}{\leq}  8s\Vert{\x^*-\x_{t+1}}\Vert_2^2 \underset{(c)}{\leq} \frac{16s}{\alpha_2}\left({f(\x_{t+1}) - f(\x^*)}\right) \\
&\underset{(d)}{\leq} \frac{64s\beta_1d_t(\x^*)}{\alpha_2(K+1)^2} \underset{(e)}{\leq} \frac{32s\beta_1e^2\ln(n)\cdot{}\Vert{\x^*-\widehat{\x}_t}\Vert_1^2}{\alpha_2(K+1)^2},
\end{align*}
where (a) holds since $\x^*$ is $s$-sparse by assumption and $\widehat{\x}_{t+1}$ is $s$-sparse by construction, (b) holds by definition of $\widehat{\x}_{t+1}$, (c) holds due to the $\ell_2$-quadratic growth assumption, (d) follows from Theorem \ref{thm:accGrad} (when applied with the initialization point $\widehat{\x}_t$ and as a consequence the prox function is $d_t(\cdot)$ as defined above), and (e) follows from upper-bounding $d_t(\x^*)$ using Lemma \ref{lem:scnorm}. 

Thus, for $K \geq \sqrt{\frac{64s\beta_1e^2\log(n)}{\alpha_2}}-1$, and noting that $\widehat{\x}_0 = \x_0$, we can deduce that
\begin{align*}
\Vert{\x^*-\widehat{\x}_t}\Vert_1^2 \leq 2^{-t}\Vert{\x^*-\x_0}\Vert_1^2.
\end{align*}
Using Theorem \ref{thm:accGrad} and Lemma \ref{lem:scnorm} again, this yields
\begin{align*}
f(\x_{t+1}) - f(\x^*) &\leq \frac{4\beta_1d_t(\x^*)}{(K+1)^2} \leq \frac{2\beta_1e^2\log(n)\Vert{\x^*-\widehat{\x}_t}\Vert_1^2}{(K+1)^2}  \\
&\leq \frac{\alpha_2}{s}2^{-(t+5)}\Vert{\x^*-\x_0}\Vert_1^2 \leq \frac{\alpha_2R^2}{s}2^{-(t+3)}.
\end{align*}
\end{proof}

\begin{remark}\label{remark:accMD}
In the derivations above, for the sake of being concrete, we chose to present Nesterov's Method 3.11 from  \cite{nesterov2005smooth} as Algorithm \ref{alg:accGD}. A different choice could have been Method 5.6 from the same paper for which Theorem \ref{thm:accGrad} still holds, however, the practical implementation of Method 5.6 might be easier since it requires that both prox computations are w.r.t. the same function $d(\cdot)$, instead of one w.r.t. the $\ell_1$ norm and one w.r.t. the function $d(\cdot)$ as in Algorithm \ref{alg:accGD}.
\end{remark}

\section{Proof of Lemma \ref{lem:L1norms}}
\begin{proof}
Denote $k = \nnz(\x)$. Let $S\subseteq[n]$ be the set of indices for non-zero entries in $\x$. Let $\x_1,\y_1\in\reals^k$ be the restrictions of $\x$ and $\y$ to the entries indexed by $S$, respectively. Accordingly, let $\x_2,\y_2\in\reals^{n-k}$ be the restrictions of $\x$ and $\y$  to the entries not indexed by $S$. It holds that,
\begin{align*}
\Vert{\x-\y}\Vert_1 &=  \Vert{\x_1-\y_1}\Vert_1 + \Vert{\y_2}\Vert_1 \\
&\leq  \Vert{\x_1-\y_1}\Vert_1 + \Vert{\y_2}\Vert_1 \\
&=  \Vert{\x_1-\y_1}\Vert_1 + \Vert{\y}\Vert_1 - \Vert{\y_1}\Vert_1 \\
&\leq \ \Vert{\x_1-\y_1}\Vert_1 + \Vert{\x_1}\Vert_1 - \Vert{\y_1}\Vert_1 \\
&\leq  2\Vert{\x_1-\y_1}\Vert_1 \\
&\leq 2\sqrt{k}\Vert{\x_1-\y_1}\Vert_2\\
&\leq 2\sqrt{k}\Vert{\x-\y}\Vert_2. 
\end{align*}
\end{proof}

\section{Proof of Lemma \ref{lem:NNnorms}}
\begin{proof}
Throughout the proof we let $\Vert{\cdot}\Vert_1$, $\Vert{\cdot}\Vert_2$, $\Vert{\cdot}\Vert_{\op}$ denote the nuclear norm, the Euclidean (Frobenius) norm, and operator norm (largest singular value) in $\reals^{m\times n}$, respectively. 
Denote $\D:=\Y-\X$ and $r=\mathrm{rank}(\X)$. Let $\X=\U\S \V^\top$ be a compact SVD of $\X$.
Define the orthogonal projectors
$\P_{\U}:=\U\U^\top, \P_{\V}:=\V\V^\top$, and accordingly define the linear maps $\mathcal{P}_T,\mathcal{P}_{\perp}:\mathbb{R}^{m\times n}\to\mathbb{R}^{m\times n}$ as
\[
\mathcal{P}_T(\Z):=\P_{\U}\Z+\Z\P_{\V}-\P_{\U}\Z\P_{\V},\qquad 
\mathcal{P}_{\perp}(\Z):=(\I-\P_{\U})\,\Z\,(\I-\P_{\V}).
\]
Write
\[
\D_T:=\mathcal{P}_T(\D),\qquad \D_\perp:=\mathcal{P}_{T^\perp}(\D),
\]
so that $\D=\D_T+\D_\perp$.

Clearly, we have that
\begin{align}\label{eq:lem:lem:NNnorms:s1}
\Vert{\D}\Vert_1 \leq \Vert{\D_T}\Vert_1 + \Vert{\D_{\perp}}\Vert_1.
\end{align}
Additionally, since $\U,\V$ are of rank $r$, it follows from the definition of the map $\mathcal{P}_T$ that $\rank(\D_T) \leq 2r$. Thus, we have that
\begin{align}\label{eq:lem:lem:NNnorms:s2}
\Vert{\D_T}\Vert_1 \leq \sqrt{\mathrm{rank}(\D_T)}\Vert{\D_T}\Vert_2 \leq \sqrt{2r}\Vert{\D_T}\Vert_2.
\end{align}

Thus, given \eqref{eq:lem:lem:NNnorms:s1} and \eqref{eq:lem:lem:NNnorms:s2}, to prove the lemma, it suffices to show that I. $\Vert{\D_{\perp}}\Vert_1 \leq \Vert{\D_T}\Vert_1$ and II. that $\Vert{\D_T}\Vert_2 \leq \Vert{\D}\Vert_2$.

To see why $\Vert{\D_T}\Vert_2 \leq \Vert{\D}\Vert_2$ holds, it suffices to show that $\mathcal{P}_T$ is an orthogonal projection, meaning it is a contraction in Frobenius norm. To see this, observe that by definition, for any $\Z$, $\mP_{\perp}(\mP_{\perp}(\Z)) = \mP_{\perp}(\Z)$, meaning $\mP_{\perp}$ is an orthogonal projection. Since, by definition $\mP_T = \mathcal{I} - \mP_{\perp}$, where $\mathcal{I}$ is the identity map, it follows that $\mP_T$ is also an orthogonal projection.

Thus, in the remainder of the proof we show that $\Vert{\D_{\perp}}\Vert_1 \leq \Vert{\D_T}\Vert_1$.

Write the compact SVD of  $\D_{\perp}=\widetilde \U\,\widetilde\S\,\widetilde \V^\top$.
Since $\D_{\perp}=(\I-\P_{\U})\D(\I-\P_{\V})$, we have that
\begin{equation}\label{eq:orth_spaces}
\U^{\top} \widetilde \U = \mathbf{0},\qquad \V^\top \widetilde \V = \mathbf{0}.
\end{equation}
Define $\W:=\widetilde \U\widetilde \V^\top$. Note that $\|\W\|_{\op}=1$. Note that due to \eqref{eq:orth_spaces} we have that $\Vert{\U\V^\top + \W}\Vert_{\op}= 1$.

Using the variation characterization of the nuclear norm, we have that
\begin{align*}
\Vert{\Y}\Vert_1 &= \Vert{\X + \D}\Vert_1 \\
&= \max_{\Vert{\Q}\Vert_{\op}\leq 1}\langle{\Q,\X+\D}\rangle \\
&\underset{(a)}{\geq} \langle{\U\V^\top + \W, \X+\D}\rangle \\
&=\langle{\U\V^{\top}, \X}\rangle + \langle{\W, \X}\rangle + \langle{\U\V^{\top}+\W, \D}\rangle \\
&\underset{(b)}{=} \Vert{\X}\Vert_1 + \langle{\U\V^{\top}+\W, \D_T + \D_{\perp}}\rangle \\
&\underset{(c)}{=} \Vert{\X}\Vert_1 + \langle{\U\V^{\top}, \D_T}\rangle + \langle{\W, \D_{\perp}}\rangle \\
&\underset{(d)}{=} \Vert{\X}\Vert_1 + \langle{\U\V^{\top}, \D_T}\rangle +\Vert{\D_{\perp}}\Vert_1,
\end{align*}
where (a) holds since, as noted above, $\Vert{\U\V^\top + \W}\Vert_{\op}= 1$, (b) holds since using the SVD of $\X$ we have that $\langle{\U\V^{\top}, \X}\rangle = \trace(\U\V^{\top}\X^{\top}) = \trace(\U\V^{\top}\V\S\U^{\top}) = \trace(\S) = \Vert{\X}\Vert_1$, (c) holds since by definition of $\D_{\perp}$ we have that $\langle{\U\V^{\top}, \D_{\perp}}\rangle = 0$ and by definition of $\D_T$ and $\W$ we have that $\langle{\W, \D_T}\rangle = 0$, and (d) holds since, by definition of $\W$ and using the SVD of $\D_{\perp}$, we have that $\langle{\W, \D_{\perp}}\rangle = \trace(\widetilde\U\widetilde\V^{\top}\D_{\perp}^{\top}) = \trace(\widetilde\U\widetilde \V^{\top}\widetilde\V\widetilde\S\widetilde\U^\top) = \trace(\widetilde\S) = \Vert{\D_{\perp}}\Vert_1$.

Using the assumption that $\Vert{\Y}\Vert_1 \leq \Vert{\X}\Vert_1$ and the fact that $\Vert{\U\V^{\top}}\Vert_{\op} = 1$, we indeed have that
\begin{align*}
\Vert{\D_{\perp}}\Vert_1 \leq -\langle{\U\V^{\top}, \D_T}\rangle \leq \Vert{\U\V^{\top}}\Vert_{\op}\cdot\Vert{\D_T}\Vert_1 = \Vert{\D_T}\Vert_1,
\end{align*}
which concludes the proof.
\end{proof}

\section{Sparse Projections onto the Nuclear Norm Ball}
\begin{lemma}\label{lem:matProj}
Let $\Z\in\reals^{m\times n}$ with SVD $\Z = \sum_{i=1}^{\min\{m,n\}}\sigma_i\u_i\v_i^{\top}$ ($\sigma_1 \geq \sigma_2\geq...$) and $s\in[\min\{m,n\}]$. A Euclidean projection of $\Z$ onto $\{\X\in\reals^{m\times n}~|~\Vert{\X}\Vert_{1}  \leq R, \rank(\X) \leq s\}$ is given by $\Z'=\sum_{i=1}^s\sigma_i'\u_i\v_i^{\top}$, where $(\sigma_1',\dots,\sigma_s')$ is the Euclidean projection of the vector $(\sigma_1,\dots,\sigma_s)$ onto $\{\x\in\reals^s~|~\x\geq 0,~\sum_{i=1}^s\x_i \leq R\}$.
\end{lemma}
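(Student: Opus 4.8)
The plan is to show that $\Z'$ minimizes $g(\X):=\Vert{\X-\Z}\Vert_2^2$ over the (non-convex) feasible set $\mF:=\{\X\in\mbS^n~|~\X\succeq 0,~\trace(\X)\leq R,~\rank(\X)\leq s\}$, by producing a lower bound on $g$ over $\mF$ that $\Z'$ attains with equality. The conceptual point is that, although $\mF$ is non-convex so first-order optimality is unavailable, the optimal alignment of the eigenvectors of $\X$ with those of $\Z$ is forced by a trace inequality, which decouples the problem and reduces it to the eigenvalue projection in the statement.

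First I would set up the lower bound. Any $\X\in\mF$ has an eigen-decomposition $\X=\sum_{i=1}^n\mu_i\v_i\v_i^{\top}$ with $\mu_1\geq\cdots\geq\mu_n\geq 0$, $\mu_i=0$ for $i>s$, and $\sum_{i=1}^s\mu_i=\trace(\X)\leq R$. Expanding the squared norm,
\begin{align*}
\Vert{\X-\Z}\Vert_2^2=\Vert{\X}\Vert_2^2-2\trace(\X\Z)+\Vert{\Z}\Vert_2^2=\sum_{i=1}^s\mu_i^2-2\trace(\X\Z)+\sum_{i=1}^n\lambda_i^2 .
\end{align*}
By von Neumann's trace inequality for symmetric matrices (the same Ky Fan / von Neumann bound already used in the proof of Lemma \ref{lem:NNnorms}), $\trace(\X\Z)\leq\sum_{i=1}^n\mu_i\lambda_i=\sum_{i=1}^s\mu_i\lambda_i$ since $\mu_i=0$ for $i>s$, hence
\begin{align*}
\Vert{\X-\Z}\Vert_2^2\geq\sum_{i=1}^s(\mu_i-\lambda_i)^2+\sum_{i=s+1}^n\lambda_i^2 .
\end{align*}
Now $(\mu_1,\dots,\mu_s)$ lies in the convex set $\mS_R:=\{\y\in\reals^s~|~\y\geq 0,~\sum_{i=1}^s\y_i\leq R\}$, while $(\lambda_1',\dots,\lambda_s')$ is by definition the Euclidean projection of $(\lambda_1,\dots,\lambda_s)$ onto $\mS_R$, so $\sum_{i=1}^s(\mu_i-\lambda_i)^2\geq\sum_{i=1}^s(\lambda_i'-\lambda_i)^2$. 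This yields the uniform bound $g(\X)\geq\sum_{i=1}^s(\lambda_i'-\lambda_i)^2+\sum_{i=s+1}^n\lambda_i^2$ for every $\X\in\mF$.

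Finally I would check that $\Z'=\sum_{i=1}^s\lambda_i'\u_i\u_i^{\top}$ is feasible and attains the bound: $\Z'\succeq 0$ because $\lambda_i'\geq 0$, $\trace(\Z')=\sum_{i=1}^s\lambda_i'\leq R$, and $\rank(\Z')\leq s$, so $\Z'\in\mF$; and since $\Z'$ and $\Z$ are simultaneously diagonal in the basis $\{\u_i\}$, $\Vert{\Z'-\Z}\Vert_2^2=\sum_{i=1}^s(\lambda_i'-\lambda_i)^2+\sum_{i=s+1}^n\lambda_i^2$, matching the lower bound. Hence $\Z'$ is a Euclidean projection. I expect the only real subtlety to be the first step — recognizing that von Neumann's trace inequality exactly captures the gain from choosing the eigenvectors of $\X$ to be the leading eigenvectors of $\Z$, after which everything reduces to the (standard) existence and optimality of the projection onto $\mS_R$ and a one-line computation of $\Vert{\Z'-\Z}\Vert_2^2$; it is also worth noting in passing that, since $\lambda_1\geq\cdots\geq\lambda_s$ and the projection onto the down-closed simplex has the form $\lambda_i'=\max\{\lambda_i-\theta,0\}$ for a suitable $\theta\geq 0$, the $\lambda_i'$ come out correctly ordered, though this is not needed for feasibility.
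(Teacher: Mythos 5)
Your proof is correct and follows essentially the same route as the paper's: both hinge on von Neumann's trace inequality to decouple the eigenvector alignment and then reduce to the optimality of the vector projection $(\lambda_1',\dots,\lambda_s')$ onto the downward-closed simplex. The only cosmetic difference is that you phrase it as a uniform lower bound on $\Vert{\X-\Z}\Vert_2^2$ attained by $\Z'$, whereas the paper directly shows $\Vert{\Y-\Z}\Vert_2^2-\Vert{\Z'-\Z}\Vert_2^2\geq 0$ for every feasible $\Y$.
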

\begin{proof}
Given some $\Z\in\reals^{m\times n}$ and $s\in[\min\{m,n\}]$, let $\Z'$ be as described in the lemma, and let $\Y\in\{\X\in\reals^{m\times n}~|~\Vert{\X}\Vert_{1} \leq R, ~\rank(\X)\leq s\}$ given by its SVD $\Y=\sum_{i=1}^s\gamma_i\p_i\w_i^{\top}$, which clearly implies that $(\gamma_1,\dots,\gamma_s)\in\{\x\in\reals^s~|~\x\geq 0,~\sum_{i=1}^s\x_i \leq R\}$. Recall $\Vert{\cdot}\Vert$ denotes the Euclidean norm. It holds that
\begin{align*}
\Vert{\Y-\Z}\Vert_2^2 -  \Vert{\Z'-\Z}\Vert_2^2 &= \Vert{\Y}\Vert_2^2 - \Vert{\Z'}\Vert_2^2 - 2\langle{\Y-\Z', \Z}\rangle   \\
& = \sum_{i=1}^s\gamma_i^2 - \sum_{j=1}^s\sigma_j'^2  - 2\langle{\Y,\Z}\rangle + 2\sum_{i=1}^s\sigma_i'\sigma_i \\
& \geq \sum_{i=1}^s\gamma_i^2 - \sum_{j=1}^s\sigma_j'^2  - 2\sum_{i=1}^s\gamma_i\sigma_i + 2\sum_{i=1}^s\sigma_i'\sigma_i,
\end{align*} 
where the last inequality is due to von Neumann's trace inequality \cite{mirsky1975trace}. 

Rearranging, we have that
\begin{align*}
\Vert{\Y-\Z}\Vert_2^2 -  \Vert{\Z'-\Z}\Vert_2^2 \geq \sum_{i=1}^s(\gamma_i - \sigma_i)^2 - \sum_{j=1}^s(\sigma'_j - \sigma_j)^2 \geq 0,
\end{align*}
where the last inequality holds since $(\sigma_1',\dots,\sigma_s')$ is the Euclidean projection of $(\sigma_1,\dots\sigma_s)$ onto $\{\x\in\reals^s~|~\x\geq 0,~\sum_{i=1}^s\x_i \leq R\}$. 
\end{proof}

\bibliography{bibs.bib}
\bibliographystyle{plain}

\end{document}